\newtheorem{theorem}{Theorem}[section]
\newtheorem{corollary}[theorem]{Corollary}
\newtheorem{lemma}[theorem]{Lemma}
\newtheorem{proposition}[theorem]{Proposition}
\newtheorem{Definition}[theorem]{Definition}
\newtheorem{Example}[theorem]{Example}
\newtheorem{Remark}[theorem]{Remark}
\newenvironment{remark}{\begin{Remark}\begin{em}}{\end{em}\end{Remark}}
\DeclareMathOperator{\tr}{tr}
\address{Jinmi Hwang \\ Department of Mathematics, Chungbuk National University, Cheongju 28644, Korea}
\email{jinmi0401@chungbuk.ac.kr}
\address{Sejong Kim \\ Department of Mathematics, Chungbuk National University, Cheongju 28644, Korea}
\email{skim@chungbuk.ac.kr}
\begin{document}

\title[Tensor product and Hadamard product for the Wasserstein means]{Tensor product and Hadamard product for the Wasserstein means}

\author{Jinmi Hwang and Sejong Kim}

\date{}
\maketitle

\begin{abstract}

As one of the least squares mean, we consider the Wasserstein mean of positive definite Hermitian matrices. We verify in this paper the inequalities of the Wasserstein mean related with a strictly positive and unital linear map, the identity of the Wasserstein mean for tensor product, and some inequalities of the Wasserstein mean for Hadamard product.

\vspace{5mm}

\noindent {\bf Mathematics Subject Classification} (2010): 15B48, 15A69.

\noindent {\bf Keywords}: Wasserstein mean, positive linear map, tensor product, Hadamard product
\end{abstract}

\section{Introduction and preliminaries}

It is a long-standing problem to define a barycenter (or a mean) of a finite number of points in a metric space. Given a probability vector $\omega = (w_{1}, \dots, w_{n})$, a natural and canonical barycenter is the least squares mean, which is a minimizer of the weighted sum of squares of distances to each point. In the open convex cone $\mathbb{P}_{m}$ of positive definite matrices, which we will consider throughout the paper, there are several different and important barycenters depending on the given distances. For instance, the arithmetic mean is the least squares mean in the real vector space $\mathbb{P}_{m}$ equipped with the Euclidean distance $d_{E} (A, B) = \Vert A - B \Vert_{2}$, and the Cartan mean is the least squares mean in the non-positive curvature space (CAT(0) space or Hadamard space) $\mathbb{P}_{m}$ equipped with the Riemannian trace distance $d_{R} (A, B) = \Vert \log A^{-1/2} B A^{-1/2} \Vert_{2}$. On the other hand, it is difficult to see whether such a minimizer exists, and whether the minimizer is unique if it exists. Recently a new metric, called the Wasserstein metric, and the least squares mean on our setting $\mathbb{P}_{m}$ have been introduced \cite{ABCM, BJL}.

For given $A, B \in \mathbb{P}_{m},$ the \emph{Wasserstein metric} $d(A,B)$ is given by
\begin{displaymath}
d(A,B)=\left[\tr\left(\frac{A+B}{2}\right)-\tr(A^{1/2}BA^{1/2})^{1/2}\right]^{1/2}.
\end{displaymath}
In quantum information theory, the Wasserstein metric is known as the Bures distance of density matrices. The unique geodesic connecting from $A$ to $B$ is given by
\begin{displaymath}
A \diamond_{t} B = (1-t)^{2}A + t^{2}B + t(1-t)\left[(AB)^{1/2}+(BA)^{1/2}\right], \ t \in [0,1].
\end{displaymath}
As the least squares mean for the Wasserstein metric, the \emph{Wasserstein mean} denoted by $\Omega(\omega; \mathbb{A})$ for $\mathbb{A}=(A_{1}, \cdots, A_{n})$ is defined by
\begin{equation}
\Omega(\omega; \mathbb{A}) = \underset{X \in \mathbb{P}_{m}} {\arg \min} \sum^{n}_{j=1} w_{j} d^{2}(X, A_{j}),
\end{equation}
and it coincides with the unique solution $X \in \mathbb{P}_{m}$ of the matrix nonlinear equation
\begin{equation} \label{E:Wass eq}
I = \sum_{j=1}^{n} w_{j} (A_{j} \# X^{-1}),
\end{equation}
where $A \# B = A^{1/2} (A^{-1/2} B A^{-1/2})^{1/2} A^{1/2}$ is the geometric mean of $A$ and $B$ in $\mathbb{P}_{m}$. From the equivalent equation \eqref{E:Wass eq} of the Wasserstein mean, many properties of the Wasserstein mean can be derived from the those of two-variable geometric mean, so we here list some of its properties: for any $A, B, C, D \in \mathbb{P}_{m}$
\begin{itemize}
\item[(G1)] $(a A) \# (b B) = \sqrt{a b} (A \# B)$ for any $a, b > 0$.
\item[(G2)] $A \# B = B \# A$.
\item[(G3)] $A \# B \leq C \# D$ whenever $A \leq C$ and $B \leq D$.
\item[(G4)] $X (A \# B) X^{*} = (X A X^{*}) \# (X B X^{*})$ for any nonsingular matrix $X$.
\item[(G5)] $(A \# B)^{-1} = A^{-1} \# B^{-1}$.
\item[(G6)] $\det (A \# B) = \sqrt{\det A \det B}$.
\item[(G7)] $\displaystyle \left[ \frac{A^{-1} + B^{-1}}{2} \right]^{-1} \leq A \# B \leq \frac{A + B}{2}$.
\end{itemize}

Many interesting properties of the Wasserstein mean including the log-majorization \cite{BJL18}, order inequalities and Lie-Trotter product formula \cite{HK}, and relationships with other matrix means \cite{LK19} have been found. By using the strict concavity of the function $\log \det: \mathbb{P}_{m} \to \mathbb{R}$, we do not find only the determinantal inequality of the Wasserstein mean in Section 2, but also the equivalent condition that the determinantal equality holds.

The (strictly) positive linear map with its related properties is a very crucial tool to study operator algebra and quantum information theory. Differently from the usual matrix multiplication, tensor (Kronecker) product and Hadamard product are commonly used in matrix equation, image processing, and machine learning due to their algebraic characterizations. The positive linear map also plays an important role to connect between the tensor product and Hadamard product. Applying bounds of the Wasserstein mean verified in \cite{HK} we find in Section 3 inequalities of the Wasserstein mean related with the strictly positive linear map. We finally see in Section 4 the identity and inequalities of the Wasserstein mean with the tensor product and Hadamard product.

\section{Wasserstein mean}

Let $M_{m,k}$ be the set of all $m \times k$ matrices with complex entries. We simply denote as $M_{m} := M_{m, m}$. Let $\mathbb{H}_{m} \subset M_{m}$ be the real vector space of all Hermitian matrices. Let $\mathbb{P}_{m} \subset \mathbb{H}_{m}$ be the open convex cone of all positive definite matrices. For any $A, B \in \mathbb{H}_{m}$ we write $A \leq B$ if $B - A$ is positive semi-definite, and $A < B$ if $B - A$ is positive definite. This is indeed a partial order on $\mathbb{H}_{m}$, known as the Loewner order.

Let $\mathcal{P}(\mathbb{R}^{n})$ be the set of all Borel probability measures on the $n$-dimensional Euclidean space $\mathbb{R}^{n}$. For $1 \leq r < \infty$
\begin{displaymath}
\mathcal{P}^{r}(\mathbb{R}^{n}) = \left\{ \mu \in \mathcal{P}(\mathbb{R}^{n}) : \int_{\mathbb{R}^{n}} \Vert x-y \Vert^{r} \,d\mu(x) < \infty \ \textrm{for any} \ y \in \mathbb{R}^{n} \right\}
\end{displaymath}
Let $\mathcal{P}^{0}(\mathbb{R}^{n})$ be a set of all uniformly distributed probability measures, and let $\mathcal{P}^{\infty}(\mathbb{R}^{n})$ be a set of all probability measures whose support is bounded.

Given $\mu, \nu \in \mathcal{P}^{2}(\mathbb{R}^{n})$ the $2$-Wasserstein distance is defined as
\begin{displaymath}
W_{2}(\mu, \nu) := \left\{ \underset{\pi \in \Pi (\mu, \nu)}{\inf} \int_{\mathbb{R}^{n}} \Vert x-y \Vert^{2} d\pi(x,y) \right\}^{1/2},
\end{displaymath}
where $\Pi (\mu, \nu)$ denotes the set of all couplings on $\mathbb{R}^{n} \times \mathbb{R}^{n}$ with marginals $\mu$ and $\nu$. Especially, the $2$-Wasserstein distance for two Gaussian probabilities $\mu = P(m_{1}, A), \nu = P(m_{2}, B)$ with means $m_{1}, m_{2}$ and covariance matrices $A, B \in \mathbb{P}_{m}$ is given by
\begin{displaymath}
W_{2}^{2}(\mu, \nu) = | m_{1} - m_{2} |^{2} + \tr \left[ A + B - 2 (A^{1/2} B A^{1/2})^{1/2} \right].
\end{displaymath}
Here, we consider the $2$-Wasserstein distance for two Gaussian probabilities with mean $0$ such as
\begin{equation} \label{E:Wasserstein metric}
d(A, B) := \frac{1}{\sqrt{2}} W_{2}(P(0, A), P(0, B)) = \left[ \tr \left( \frac{A + B}{2} \right) - \tr (A^{1/2} B A^{1/2})^{1/2} \right]^{1/2}.
\end{equation}
See \cite{ABCM} for more details.

The $2$-Wasserstein distance \eqref{E:Wasserstein metric} and the unique geodesic for this metric on the open convex cone $\mathbb{P}_{m}$ of positive definite matrices have been recently introduced in \cite{BJL}. This metric is the matrix version of the Hellinger distance
$$ \displaystyle d(\overrightarrow{p}, \overrightarrow{q}) = \left[ \frac{1}{2} \sum_{i=1}^{n} ( \sqrt{p_{i}} - \sqrt{q_{i}} )^{2} \right]^{1/2} $$
for two probability distributions $\overrightarrow{p} = (p_{1}, \dots, p_{n})$ and $\overrightarrow{q} = (q_{1}, \dots, q_{n})$. Moreover, it coincides with the Bures distance of density matrices in quantum information theory and the Wasserstein metric in statistics and the theory of optimal transport. The Bures-Wasserstein metric is a Riemannian metric induced by the inner product
\begin{displaymath}
\langle X, Y \rangle_{A} = \sum_{i,j=1}^{m} \frac{\alpha_{i} \textrm{Re} (\overline{x_{ji}} y_{ji})}{(\alpha_{i} + \alpha_{j})^{2}}
\end{displaymath}
for any $X = [ x_{ij} ]$ and $Y = [ y_{ij} ]$ on the tangent space $T_{A} \mathbb{P}_{m} \equiv \mathbb{H}_{m}$ for each $A \in \mathbb{P}_{m}$, where $\alpha_{1}, \dots, \alpha_{m}$ are positive eigenvalues of $A \in \mathbb{P}_{m}$. The unique geodesic connecting from $A$ to $B$ for the Bures-Wasserstein distance is given by
\begin{displaymath}
A \diamond_{t} B := (1-t)^{2} A + t^{2} B + t(1-t) \left[ (A B)^{1/2} + (B A)^{1/2} \right], \ t \in [0,1].
\end{displaymath}

Let $\mathbb{A} = (A_{1}, \dots, A_{n}) \in \mathbb{P}_{m}^{n}$, and let $\omega = (w_{1}, \dots, w_{n}) \in \Delta_{n}$, the simplex of all positive probability vectors in $\mathbb{R}^{n}$. We consider the following minimization problem
\begin{equation} \label{E:minimization}
\underset{X \in \mathbb{P}_{m}}{\arg \min} \sum_{j=1}^{n} w_{j} d^{2}(X, A_{j}),
\end{equation}
where $d$ is the Bures-Wasserstein distance on $\mathbb{P}_{m}$.
By using tools from non-smooth analysis, convex duality, and the optimal transport theory, it has been proved in Theorem 6.1, \cite{AC} that the above minimization problem has a unique solution in $\mathbb{P}_{m}$. On the other hand, it has been shown in \cite{BJL} that the objective function $\displaystyle f(X) = \sum_{j=1}^{n} w_{j} d^{2}(X, A_{j})$ is strictly convex on $\mathbb{P}_{m}$, by applying the strict concavity of the map $h: \mathbb{P}_{m} \to \mathbb{R}, \ h(X) = \tr (X^{1/2})$. Therefore, we define such a unique minimizer of \eqref{E:minimization} as the \emph{Wasserstein mean}, denoted by $\Omega(\omega; \mathbb{A})$. That is,
\begin{equation} \label{E:Wasserstein mean}
\Omega(\omega; \mathbb{A}) = \underset{X \in \mathbb{P}_{m}}{\arg \min} \sum_{j=1}^{n} w_{j} d^{2}(X, A_{j}).
\end{equation}

To find the unique minimizer of objective function $f: \mathbb{P}_{m} \to \mathbb{R}$, we evaluate the derivative $D f(X)$ and set it equal to zero.
By using matrix differential calculus, we have the following.
\begin{theorem} \cite[Theorem 8]{BJL} \label{T:WassEq}
The Wasserstein mean $\Omega(\omega; \mathbb{A})$ is a unique solution $X \in \mathbb{P}_{m}$ of the nonlinear matrix equation
\begin{displaymath}
I = \sum_{j=1}^{n} w_{j} (A_{j} \# X^{-1}),
\end{displaymath}
equivalently,
\begin{displaymath}
X = \sum_{j=1}^{n} w_{j} (X^{1/2} A_{j} X^{1/2})^{1/2}.
\end{displaymath}
\end{theorem}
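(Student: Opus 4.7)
The plan is to verify the critical point equation for the strictly convex objective $f(X) = \sum_{j=1}^{n} w_{j} d^{2}(X, A_{j})$, and then invoke uniqueness of its minimizer. Writing out $f$, the only $X$-dependent terms are
\begin{displaymath}
\frac{1}{2}\tr(X) - \sum_{j=1}^{n} w_{j}\, \tr\bigl((X^{1/2} A_{j} X^{1/2})^{1/2}\bigr).
\end{displaymath}
Since $X^{1/2} A_{j} X^{1/2}$ is similar to $X A_{j}$ and hence to $A_{j}^{1/2} X A_{j}^{1/2}$, the trace of the square root is unchanged if we rewrite it as $\tr\bigl((A_{j}^{1/2} X A_{j}^{1/2})^{1/2}\bigr)$. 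This form is convenient because the argument of the square root is now linear in $X$.

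Next I would differentiate. For any fixed $A \in \mathbb{P}_{m}$, the Daleckii--Krein formula for a scalar function of a self-adjoint matrix collapses under the trace to $\frac{d}{dt}\tr((S+tH)^{1/2})|_{t=0} = \frac{1}{2}\tr(S^{-1/2} H)$. Applying this with $S = A^{1/2} X A^{1/2}$ and the chain rule yields
\begin{displaymath}
D_{X}\, \tr\bigl((A^{1/2} X A^{1/2})^{1/2}\bigr)(H) = \frac{1}{2}\tr\bigl(A^{1/2}(A^{1/2} X A^{1/2})^{-1/2} A^{1/2}\, H\bigr).
\end{displaymath}
The key algebraic identification is that the sandwich in parentheses equals $A \# X^{-1}$: indeed, starting from $A \# X^{-1} = A^{1/2}(A^{-1/2} X^{-1} A^{-1/2})^{1/2} A^{1/2}$ and using $A^{-1/2} X^{-1} A^{-1/2} = (A^{1/2} X A^{1/2})^{-1}$, one obtains $A \# X^{-1} = A^{1/2}(A^{1/2} X A^{1/2})^{-1/2} A^{1/2}$. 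This is the step I expect to be the small technical obstacle; everything else is bookkeeping.

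Combining, $Df(X)(H) = \frac{1}{2}\tr\Bigl(\bigl(I - \sum_{j} w_{j}(A_{j} \# X^{-1})\bigr) H\Bigr)$. Setting this to zero for all $H \in \mathbb{H}_{m}$ forces
\begin{displaymath}
I = \sum_{j=1}^{n} w_{j}(A_{j} \# X^{-1}).
\end{displaymath}
For the equivalent formulation, I would use the symmetry $A_{j} \# X^{-1} = X^{-1} \# A_{j}$, which by the definition of the geometric mean equals $X^{-1/2}(X^{1/2} A_{j} X^{1/2})^{1/2} X^{-1/2}$. Thus the equation becomes $I = X^{-1/2}\bigl[\sum_{j} w_{j}(X^{1/2} A_{j} X^{1/2})^{1/2}\bigr] X^{-1/2}$, and conjugating by $X^{1/2}$ yields $X = \sum_{j} w_{j}(X^{1/2} A_{j} X^{1/2})^{1/2}$.

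Finally, uniqueness: since $f$ is strictly convex on $\mathbb{P}_{m}$ (as recalled just before the theorem statement, via strict concavity of $X \mapsto \tr(X^{1/2})$) and attains its minimum in $\mathbb{P}_{m}$, the critical point equation $Df(X) = 0$ has at most one solution, which must coincide with $\Omega(\omega;\mathbb{A})$. No separate existence argument is needed here, as existence of the minimizer is already cited from \cite{AC}.
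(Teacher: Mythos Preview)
Your argument is correct and complete. Note, however, that the paper does not actually supply a proof of this theorem: it is quoted from \cite[Theorem 8]{BJL}, with only the one-sentence hint ``we evaluate the derivative $Df(X)$ and set it equal to zero\ldots\ by using matrix differential calculus'' preceding the statement. Your proposal carries out precisely that sketch---differentiating $\tr\bigl((A_{j}^{1/2} X A_{j}^{1/2})^{1/2}\bigr)$, identifying the gradient with $A_{j}\# X^{-1}$, and invoking strict convexity for uniqueness---so there is nothing further to compare against in this paper.
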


\begin{remark} \label{R:comm}
If $A_{i}$'s commute, then they are simultaneously unitarily diagonalizable by Theorem 1.3.21 in \cite{HJ}: there exists a unitary matrix $U$ such that $U A_{i} U^{*}$ are diagonal matrices for all $i$. Then the Wasserstein mean becomes
\begin{displaymath}
\Omega(\omega; \mathbb{A}) = \left[ \sum_{j=1}^{n} w_{j} A_{j}^{1/2} \right]^{2},
\end{displaymath}
which is the $1/2$-power mean of $A_{1}, \dots, A_{n}$ \cite{BJL18}.
\end{remark}

It is known from Theorem 7.6.6 in \cite{HJ} that the map $f: \mathbb{P}_{m} \to \mathbb{R}, \ f(A) = \log \det A$ is strictly concave: for any $A, B \in \mathbb{P}_{m}$ and $t \in [0,1]$
\begin{displaymath}
\log \det ( (1-t) A + t B ) \geq (1-t) \log \det A + t \log \det B,
\end{displaymath}
where equality holds if and only if $A = B$. By induction together with this, we have
\begin{lemma} \label{L:logdet}
Let $A_{1}, \dots, A_{n} \in \mathbb{P}_{m}$, and let $\omega = (w_{1}, \dots, w_{n}) \in \Delta_{n}$. Then
\begin{displaymath}
\log \det \left( \sum_{j=1}^{n} w_{j} A_{j} \right) \geq \sum_{j=1}^{n} w_{j} \log \det A_{j},
\end{displaymath}
where equality holds if and only if $A_{1} = \cdots = A_{n}$.
\end{lemma}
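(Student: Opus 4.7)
The plan is to induct on $n$, using the strict concavity of $\log\det$ (the $n=2$ case, quoted from Theorem 7.6.6 in \cite{HJ}) as the base case.

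For the inductive step, assume the inequality, together with its equality characterization, for any $n-1$ positive definite matrices and any positive probability vector of length $n-1$. Given $A_1,\ldots,A_n\in\mathbb{P}_m$ and $\omega=(w_1,\ldots,w_n)\in\Delta_n$, I would split off the last term by writing
\begin{displaymath}
\sum_{j=1}^n w_j A_j \;=\; (1-w_n)\, B \,+\, w_n A_n, \qquad B \,:=\, \sum_{j=1}^{n-1} \frac{w_j}{1-w_n}\, A_j,
\end{displaymath}
which is legitimate since $w_n\in(0,1)$ and the coefficients $w_j/(1-w_n)$ form a positive probability vector of length $n-1$. Applying the $n=2$ strict concavity inequality to $B$ and $A_n$ with weight $w_n$, and then the inductive hypothesis to $B$, would give
\begin{displaymath}
\log\det\Bigl(\sum_{j=1}^n w_j A_j\Bigr) \,\geq\, (1-w_n)\log\det B + w_n\log\det A_n \,\geq\, \sum_{j=1}^n w_j\log\det A_j,
\end{displaymath}
as desired.

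For the equality clause, the ``if'' direction is immediate. For ``only if'', suppose equality holds in the overall inequality. Then both of the intermediate inequalities above must be equalities. Equality in the $n=2$ strict concavity step forces $B=A_n$, while equality in the inductive hypothesis forces $A_1=\cdots=A_{n-1}$; combining these gives $A_n=B=A_1=\cdots=A_{n-1}$, closing the induction.

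The main (minor) obstacle is simply bookkeeping the equality case through the induction, since the overall equality must propagate into equality in \emph{both} of the two inequalities chained together; the essential analytic content is the quoted strict concavity of $\log\det$ on $\mathbb{P}_m$ and nothing more.
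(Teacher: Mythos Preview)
Your proof is correct and follows exactly the approach the paper indicates: the paper simply states that the lemma follows ``by induction together with'' the strict concavity of $\log\det$ from \cite[Theorem 7.6.6]{HJ}, and your argument is precisely that induction, including the correct handling of the equality clause.
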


The following shows the determinantal inequality of the Wasserstein mean.

\begin{theorem} \label{T:Wass-det}
Let $\mathbb{A} = (A_{1}, \dots, A_{n}) \in \mathbb{P}_{m}^{n}$, and let $\omega = (w_{1}, \dots, w_{n}) \in \Delta_{n}$. Then
\begin{equation} \label{E:logdet}
\det \Omega(\omega; \mathbb{A}) \geq \prod_{j=1}^{n} (\det A_{j})^{w_{j}},
\end{equation}
where equality holds if and only if $A_{1} = \cdots = A_{n}$.
\end{theorem}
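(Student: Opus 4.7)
The plan is to combine the characterization of $\Omega(\omega;\mathbb{A})$ via the nonlinear matrix equation in Theorem \ref{T:WassEq} with the strict log-concavity statement in Lemma \ref{L:logdet} and the determinantal identity (G6) for the geometric mean.

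Writing $X=\Omega(\omega;\mathbb{A})$, Theorem \ref{T:WassEq} gives $I=\sum_{j=1}^{n}w_{j}(A_{j}\# X^{-1})$. Taking $\log\det$ of both sides yields $0=\log\det I$, and Lemma \ref{L:logdet} applied to the convex combination on the right produces
\begin{displaymath}
0 \;=\; \log\det\left(\sum_{j=1}^{n}w_{j}(A_{j}\# X^{-1})\right) \;\geq\; \sum_{j=1}^{n}w_{j}\log\det(A_{j}\# X^{-1}).
\end{displaymath}
Next I would evaluate each term on the right using (G6), which gives $\det(A_{j}\# X^{-1})=\sqrt{\det A_{j}\,\det X^{-1}}$, so $\log\det(A_{j}\# X^{-1})=\tfrac{1}{2}(\log\det A_{j}-\log\det X)$. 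Substituting and using $\sum w_{j}=1$ turns the displayed inequality into $\log\det X\geq\sum_{j=1}^{n}w_{j}\log\det A_{j}$, which is exactly \eqref{E:logdet} after exponentiation.

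For the equality case, the ``if'' direction is immediate: when $A_{1}=\cdots=A_{n}=A$, the equation $I=\sum w_{j}(A_{j}\# X^{-1})$ forces $X=A$ and both sides of \eqref{E:logdet} equal $\det A$. For the ``only if'' direction, suppose equality holds in \eqref{E:logdet}. Then equality must hold in the application of Lemma \ref{L:logdet} above, which forces all the matrices $A_{j}\# X^{-1}$ to coincide. Since their $w$-weighted average equals $I$ and the weights sum to $1$, each $A_{j}\# X^{-1}$ must equal $I$; by property (G5) or by squaring through the definition of the geometric mean, this yields $A_{j}=X$ for every $j$, so $A_{1}=\cdots=A_{n}$.

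The only place requiring care is the equality analysis, since one needs both that equality in Lemma \ref{L:logdet} forces the summands to be identical and that $A_{j}\# X^{-1}=I$ implies $A_{j}=X$; the rest is a short chain of identities. I do not anticipate any serious obstacle, as all ingredients are already established in the excerpt.
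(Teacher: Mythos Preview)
Your proof is correct and follows essentially the same route as the paper: apply Lemma \ref{L:logdet} to the Wasserstein equation $I=\sum_{j}w_{j}(A_{j}\# X^{-1})$, use (G6) to rewrite each $\log\det(A_{j}\# X^{-1})$, and rearrange. Your treatment of the equality case is slightly more explicit than the paper's (you observe that the common value of the $A_{j}\# X^{-1}$ must be $I$, whereas the paper simply notes that $A_{i}\# X^{-1}=A_{j}\# X^{-1}$ forces $A_{i}=A_{j}$), but the argument is the same in substance.
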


\begin{proof}
Let $X = \Omega(\omega; \mathbb{A})$. Then by Theorem \ref{T:WassEq} $\displaystyle I = \sum_{j=1}^{n} w_{j} (A_{j} \# X^{-1})$, and by Lemma \ref{L:logdet}
\begin{displaymath}
\begin{split}
0 = \log \det \left[ \sum_{j=1}^{n} w_{j} (A_{j} \# X^{-1}) \right]
& \geq \sum_{j=1}^{n} w_{j} \log \det (A_{j} \# X^{-1}) \\
& = \frac{1}{2} \sum_{j=1}^{n} w_{j} \log \det A_{j} - \frac{1}{2} \log \det X.
\end{split}
\end{displaymath}
The last equality follows from the determinantal identity of two-variable geometric mean in (G6). It implies
\begin{displaymath}
\log \det X \geq \sum_{j=1}^{n} w_{j} \log \det A_{j} = \log \left[ \prod_{j=1}^{n} ( \det A_{j} )^{w_{j}} \right].
\end{displaymath}
Taking the exponential function on both sides and applying the fact that the exponential function from $\mathbb{R}$ to $(0, \infty)$ is monotone increasing, we obtain the desired inequality.

Moreover, the equality of \eqref{E:logdet} holds if and only if $A_{i} \# X^{-1} = A_{j} \# X^{-1}$ for all $i$ and $j$. By the definition of geometric mean it is equivalent to $A_{i} = A_{j}$ for all $i$ and $j$.
\end{proof}

\begin{remark}
The Cartan mean $\Lambda(\omega; \mathbb{A})$ is the least squares mean in $\mathbb{P}_{m}$ with respect to the Riemannian trace metric $d_{R} (A, B) = \Vert \log A^{-1/2} B A^{-1/2} \Vert_{2}$:
\begin{displaymath}
\Lambda(\omega; \mathbb{A}) = \underset{X \in \mathbb{P}_{m}}{\arg \min} \sum_{j=1}^{n} w_{j} d_{R}^{2}(X, A_{j}).
\end{displaymath}
By using the $k$-th antisymmetric tensor powers, it has been shown in \cite[Theorem 1]{BJL18} the weak log-majorization between the Wasserstein mean and Cartan mean:
\begin{displaymath}
\lambda(\Lambda(\omega; \mathbb{A})) \prec_{w \log} \lambda(\Omega(\omega; \mathbb{A})),
\end{displaymath}
where $\lambda(A)$ stands for the $m$-tuple of eigenvalues of $A \in \mathbb{P}_{m}$. This is much stronger than our result in Theorem \ref{T:Wass-det}. We do not only provide a different proof, but also provide a sufficient and necessary condition for the determinantal equality by using the concavity of the map $f: \mathbb{P}_{m} \to \mathbb{R}, \ f(A) = \log \det A$.
\end{remark}

\section{Inequalities of the Wasserstein mean}

In \cite{BJL} the arithmetic-Wasserstein means inequality has been shown:
\begin{displaymath}
\Omega(\omega; \mathbb{A}) \leq \sum_{j=1}^{n} w_{j} A_{j} =: \mathcal{A}(\omega; \mathbb{A}).
\end{displaymath}
On the other hand, the Wasserstein-harmonic means inequality does not hold, but a new lower bound of the Wasserstein mean with respect to the Loewner order is found.
\begin{theorem} \cite{HK} \label{T:bounds}
The Wasserstein mean $\Omega(\omega; \mathbb{A})$ satisfies the following inequalities:
\begin{displaymath}
2I - \sum_{j=1}^{n} w_{j} A_{j}^{-1} \leq \Omega(\omega; \mathbb{A}) \leq \sum_{j=1}^{n} w_{j} A_{j}.
\end{displaymath}
\end{theorem}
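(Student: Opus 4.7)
The plan is to exploit the characterization of the Wasserstein mean from Theorem \ref{T:WassEq}. Setting $X := \Omega(\omega;\mathbb{A})$ and $T_j := A_j \# X^{-1}$, I would lean throughout on the single identity $\sum_{j=1}^{n} w_j T_j = I$ together with the Riccati-type identity $T_j X T_j = A_j$, which is immediate from the very definition of the geometric mean $A_j \# X^{-1}$. Both bounds then reduce to properties already listed as (G1)--(G7).

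For the upper bound $\Omega(\omega;\mathbb{A}) \leq \sum_j w_j A_j$, I would multiply the Riccati identity by $w_j$ and sum to obtain
\[
\sum_{j=1}^{n} w_j A_j \;=\; \sum_{j=1}^{n} w_j T_j X T_j.
\]
The operator Jensen-type inequality $\sum_j w_j T_j X T_j \geq \bigl(\sum_j w_j T_j\bigr) X \bigl(\sum_j w_j T_j\bigr)$ then finishes this half, since the right-hand side collapses to $I \cdot X \cdot I = X$. The Jensen bound itself is immediate from the non-negativity of each $(T_j - I) X (T_j - I) = (T_j - I)^{*} X (T_j - I)$ and from expanding $\sum_j w_j (T_j - I) X (T_j - I)$ using $\sum_j w_j T_j = I$.

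For the lower bound $2I - \sum_j w_j A_j^{-1} \leq \Omega(\omega;\mathbb{A})$, the starting point is the elementary operator inequality $T + T^{-1} \geq 2I$, valid for every $T \in \mathbb{P}_m$, applied term by term to each $T_j$. Taking the $\omega$-weighted average and using once more $\sum_j w_j T_j = I$ yields $\sum_j w_j T_j^{-1} \geq I$. By (G5) one has $T_j^{-1} = A_j^{-1} \# X$, and the right half of (G7) dominates this by $\frac{1}{2}(A_j^{-1} + X)$. Chaining the two estimates produces $I \leq \frac{1}{2} \sum_j w_j A_j^{-1} + \frac{1}{2} X$, which rearranges to the claimed lower bound.

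The only real obstacle is spotting the change of variables $T_j = A_j \# X^{-1}$; once it is in place, both bounds drop out of the Riccati identity $T_j X T_j = A_j$ together with the standard matrix inequalities $T + T^{-1} \geq 2I$ and $A \# B \leq \frac{1}{2}(A + B)$. No analysis beyond (G5), (G7), and elementary Jensen-type manipulations should be required.
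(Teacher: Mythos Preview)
The paper does not give its own proof of this theorem: the upper bound is attributed to \cite{BJL} and the combined statement is quoted from \cite{HK}, so there is no argument in the present paper to compare against. Your proof is correct and self-contained, and in fact quite elegant.

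A couple of minor checks worth making explicit. For the Riccati identity, with the convention $A \# B = A^{1/2}(A^{-1/2}BA^{-1/2})^{1/2}A^{1/2}$ one has $(A\#B)A^{-1}(A\#B)=B$; applying this to $T_j = A_j \# X^{-1}$ gives $T_j A_j^{-1} T_j = X^{-1}$, and inverting yields $A_j = T_j X T_j$ as you use. For the Jensen step, the computation
\[
0 \;\leq\; \sum_{j} w_j (T_j - I) X (T_j - I)
\;=\; \sum_{j} w_j T_j X T_j - \Bigl(\sum_j w_j T_j\Bigr) X - X\Bigl(\sum_j w_j T_j\Bigr) + X
\;=\; \sum_j w_j A_j - X
\]
is exactly what you describe and is valid because each $T_j$ is Hermitian and $X>0$. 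The lower-bound chain $I \leq \sum_j w_j T_j^{-1} = \sum_j w_j (A_j^{-1}\# X) \leq \tfrac12\sum_j w_j A_j^{-1} + \tfrac12 X$ uses only (G5), (G7), and the scalar inequality $t+t^{-1}\geq 2$ lifted by functional calculus, all of which are legitimate here. No gaps.
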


We call that a linear map $\Phi: M_{m} \to M_{k}$ is positive if $\Phi(A) \geq O$ whenever $A \geq O$, and strictly positive if $\Phi(A) > O$ whenever $A > O$. The map $\Phi$ is said to be unital if $\Phi(I) = I$, where $I$ is the identity matrix. The positive linear map including its related properties is an important tool in operator algebra and quantum information theory. See \cite{Bh} and its bibliographies. We obtain the following inequalities of Wasserstein mean related with the strictly positive and unital linear map.

\begin{lemma} \cite[Theorem 4.4.5]{Bh} \label{L:Phi}
Let $\Phi$ be a positive linear map. Then for any $A,B \in \mathbb{P}_{m}$
\begin{displaymath}
\Phi(A \# B) \leq \Phi(A) \# \Phi(B).
\end{displaymath}
\end{lemma}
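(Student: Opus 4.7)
My plan is to use the extremal (Ando) characterization of the geometric mean via $2\times 2$ block positivity, then push this characterization through $\Phi$. The key fact I would invoke is that for $A,B\in\mathbb{P}_m$, the matrix $A\#B$ is the largest Hermitian $X$ satisfying
\[
\begin{pmatrix} A & X\\ X & B \end{pmatrix}\geq 0,
\]
an equivalence that follows from the Schur complement: the block matrix is positive semidefinite if and only if $XA^{-1}X\leq B$, and the Riccati equation $XA^{-1}X=B$ has $X=A\#B$ as its unique positive solution in $\mathbb{P}_{m}$.

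With this in hand, I would first set $X=A\#B$, so that the displayed block matrix is positive. Second, I would apply $\Phi$ blockwise to obtain
\[
\begin{pmatrix} \Phi(A) & \Phi(A\#B)\\ \Phi(A\#B) & \Phi(B)\end{pmatrix}\geq 0,
\]
and third, invoke the same extremal characterization for $\Phi(A),\Phi(B)\in\mathbb{P}_k$ (which are in $\mathbb{P}_k$ because $\Phi$ is strictly positive) to conclude $\Phi(A\#B)\leq \Phi(A)\#\Phi(B)$, which is exactly the stated inequality. Note that $\Phi(A\#B)$ is Hermitian since $A\#B$ is Hermitian and $\Phi$, being positive, is $*$-preserving.

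The main obstacle is the blockwise positivity step: sending each entry of a positive semidefinite $2\times 2$ block matrix through a merely positive linear map does not in general produce a positive semidefinite block matrix, because this amounts to positivity of $\Phi\otimes I_{2}$, i.e.\ $2$-positivity of $\Phi$. This is the reason the underlying result is typically stated for $2$-positive maps. One way to finesse this for general positive $\Phi$ is to invoke Ando's integral representation of the geometric mean as an average of parallel sums $(\lambda A)(\lambda A+B)^{-1}B$, each of which is jointly concave and is handled directly by positivity of $\Phi$ via the inequality $\Phi(C^{-1})\geq\Phi(C)^{-1}$ for strictly positive unital $\Phi$. I would first attempt the block matrix route because it is conceptually cleanest and directly matches the standard proof of Theorem~4.4.5 in the cited reference; the integral-representation route would be my fallback if the blockwise step proves insufficient under the exact hypotheses needed later in the paper.
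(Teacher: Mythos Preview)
The paper does not supply a proof of this lemma at all; it simply imports the statement from Bhatia's book via the citation \cite[Theorem 4.4.5]{Bh}. So there is no ``paper's proof'' to match against, and the comparison reduces to whether your argument stands on its own.

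Your primary route via the block-matrix extremal characterization is exactly the standard proof when $\Phi$ is assumed $2$-positive, and you have correctly identified the gap: passing
\[
\begin{pmatrix} A & A\#B\\ A\#B & B\end{pmatrix}\ \longmapsto\ \begin{pmatrix} \Phi(A) & \Phi(A\#B)\\ \Phi(A\#B) & \Phi(B)\end{pmatrix}
\]
while preserving positive semidefiniteness is precisely the assertion that $I_{2}\otimes\Phi$ is positive, i.e.\ that $\Phi$ is $2$-positive. Under the bare hypothesis ``$\Phi$ positive'' in the lemma this step does fail in general, so as written your main line does not close. Your fallback is the right repair and is essentially Ando's original argument: use that $A\#B$ is the common limit of the arithmetic--harmonic iteration (or, equivalently, the integral/parallel-sum representation), together with the Choi inequality $\Phi(C)^{-1}\le\Phi(C^{-1})$, which holds for every positive \emph{unital} $\Phi$ and needs no $2$-positivity. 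One then reduces the non-unital case to the unital one by conjugating with $\Phi(I)^{-1/2}$ (after an $\varepsilon$-perturbation to ensure invertibility), and takes limits.

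For the purposes of this paper the distinction is moot anyway: the only place Lemma~\ref{L:Phi} is invoked is inside Theorem~\ref{T:Phi-Wass}, where $\Phi$ is explicitly assumed strictly positive and unital. In that setting your fallback argument goes through cleanly without the normalization step, and even your block-matrix route would be salvageable if one is willing to quote that the geometric mean, being a Kubo--Ando operator mean, automatically satisfies the transformer inequality for positive maps. So: the proposal is sound provided you lead with the parallel-sum/iteration argument rather than the block-matrix one; the block-matrix approach, as you yourself flagged, proves a weaker statement than the lemma claims.
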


\begin{theorem} \label{T:Phi-Wass}
Let $\Phi$ be a strictly positive and unital linear map. Then
\begin{displaymath}
\Phi(\Omega(\omega; \mathbb{A})) \geq 2I - \sum_{j=1}^{n} w_{j} \Phi(A_{j}^{-1}).
\end{displaymath}
Moreover,
\begin{displaymath}
\Phi(\Omega(\omega; \mathbb{A})^{-1}) \geq 2I - \sum_{j=1}^{n} w_{j} \Phi(A_{j}).
\end{displaymath}
\end{theorem}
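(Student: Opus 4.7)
My approach would be to treat the two inequalities separately, leveraging two different reductions of the Wasserstein mean: the explicit order bounds from Theorem \ref{T:bounds} for the first, and the defining nonlinear equation from Theorem \ref{T:WassEq} combined with Lemma \ref{L:Phi} for the second. Throughout I would use two standing facts about $\Phi$: any positive linear map is monotone on the Hermitian matrices (if $A \leq B$ then $B - A \geq O$, so by linearity and positivity $\Phi(A) \leq \Phi(B)$), and $\Phi(I) = I$ by the unital hypothesis.

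For the first inequality I would simply apply $\Phi$ to the lower bound
\[
2I - \sum_{j=1}^{n} w_{j} A_{j}^{-1} \leq \Omega(\omega; \mathbb{A})
\]
provided by Theorem \ref{T:bounds}. Linearity, monotonicity, and $\Phi(I) = I$ then immediately yield $2I - \sum_{j=1}^{n} w_{j} \Phi(A_{j}^{-1}) \leq \Phi(\Omega(\omega; \mathbb{A}))$.

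For the second inequality, set $X = \Omega(\omega; \mathbb{A})$. I would first apply $\Phi$ to the Wasserstein mean equation $I = \sum_{j=1}^{n} w_{j} (A_{j} \# X^{-1})$ from Theorem \ref{T:WassEq} and then push $\Phi$ inside each geometric mean using Lemma \ref{L:Phi}, obtaining
\[
I = \sum_{j=1}^{n} w_{j} \Phi(A_{j} \# X^{-1}) \leq \sum_{j=1}^{n} w_{j} \bigl[ \Phi(A_{j}) \# \Phi(X^{-1}) \bigr].
\]
Next I would bound each summand on the right by the arithmetic mean via property (G7), giving
\[
I \leq \frac{1}{2} \sum_{j=1}^{n} w_{j} \Phi(A_{j}) + \frac{1}{2} \Phi(X^{-1}),
\]
and finally rearrange to conclude $\Phi(X^{-1}) \geq 2I - \sum_{j=1}^{n} w_{j} \Phi(A_{j})$.

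I do not foresee a serious obstacle; both arguments only chain together already-established facts. The one subtle point worth flagging is that although the two inequalities look symmetric under the formal swap $X \leftrightarrow X^{-1}$, $A_{j} \leftrightarrow A_{j}^{-1}$, the defining Wasserstein equation is not symmetric under this swap. Consequently Lemma \ref{L:Phi} together with (G7) delivers the second inequality cleanly but does not give the first by the same route, which is why the first inequality must lean on the lower bound in Theorem \ref{T:bounds}, whose justification in \cite{HK} uses more of the geometric structure of $\Omega$ than Ando's inequality alone.
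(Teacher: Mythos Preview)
Your proposal is correct and follows essentially the same route as the paper: the first inequality by applying $\Phi$ to the lower bound in Theorem~\ref{T:bounds}, and the second by applying $\Phi$ to the Wasserstein equation from Theorem~\ref{T:WassEq}, invoking Lemma~\ref{L:Phi}, then (G7), and rearranging. Your closing remark about the asymmetry between the two inequalities also anticipates the observation the paper makes immediately after the theorem.
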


\begin{proof}
By Theorem \ref{T:bounds} and the positive unital linear map $\Phi$,
\begin{displaymath}
\Phi(\Omega(\omega; \mathbb{A})) \geq \Phi \left( 2I- \sum_{j=1}^{n} w_{j} A_{j}^{-1} \right) \geq  2I - \sum_{j=1}^{n} w_{j} \Phi(A_{j}^{-1}).
\end{displaymath}
So we obtain the first inequality.

To prove the second inequality, let $X = \Omega(\omega; \mathbb{A})$. Then by Theorem \ref{T:WassEq} and the strict positive unital linear map $\Phi$,
\begin{displaymath}
\begin{split}
I & = \Phi(I) = \Phi \left( \sum_{j=1}^{n} w_{j} (A_{j} \# X^{-1}) \right) = \sum_{j=1}^{n} w_{j} \Phi(A_{j} \# X^{-1}) \\
& \leq \sum_{j=1}^{n} w_{j} \Phi(A_{j}) \# \Phi(X^{-1}) \leq \frac{1}{2} \sum_{j=1}^{n} w_{j} \Phi(A_{j}) + \frac{1}{2} \Phi(X^{-1}).
\end{split}
\end{displaymath}
The first inequality follows from Lemma \ref{L:Phi}, and the second inequality follows from the arithmetic-geometric mean inequality in (G7). Solving the above for $\Phi(X^{-1})$ yields we obtain the desired inequality.
\end{proof}

\begin{remark}
By the first inequality in Theorem \ref{T:Phi-Wass} one can easily have
\begin{displaymath}
\Phi(\Omega(\omega; \mathbb{A}^{-1})) \geq 2I - \sum_{j=1}^{n} w_{j} \Phi(A_{j}),
\end{displaymath}
where $\mathbb{A}^{-1} := (A_{1}^{-1}, \dots, A_{n}^{-1}) \in \mathbb{P}_{m}^{n}$. On the other hand, it does not satisfy the self-duality of the Wasserstein mean: $\Omega(\omega; \mathbb{A}^{-1}) \neq \Omega(\omega; \mathbb{A})^{-1}$. It means that the second inequality in Theorem \ref{T:Phi-Wass} could not be derived from the first inequality in Theorem \ref{T:Phi-Wass}.
\end{remark}

\begin{remark}
As an extension of the result in Lemma \ref{L:Phi} the following has been shown in \cite[Corollary 4.5]{LP}:
\begin{displaymath}
\Phi(\Lambda(\omega; \mathbb{A})) \leq \Lambda(\omega; \Phi(A_{1}), \dots, \Phi(A_{n}))
\end{displaymath}
for any positive unital linear map $\Phi$, and the equality holds for any strictly positive unital linear map $\Phi$. Theorem \ref{T:Phi-Wass} tells us the relation between $\Phi(\Omega(\omega; \mathbb{A}))$ and the arithmetic mean of $\Phi(A_{1}), \dots, \Phi(A_{n})$. On the other hand , the order relation between $\Phi(\Omega(\omega; \mathbb{A}))$ and $\Omega(\omega; \Phi(A_{1}), \dots, \Phi(A_{n}))$ is unknown yet.
\end{remark}

\section{Tensor product and Hadamard product}

The \emph{tensor product} $A \otimes B$ of $A=[a_{ij}] \in M_{m,k}$ and $B=[b_{ij}] \in M_{s,t}$ is the $ms \times kt$ matrix:
\begin{displaymath}
A \otimes B :=
\left[
  \begin{array}{ccc}
    a_{11}B & \cdots & a_{1k}B \\
    \vdots & \ddots & \vdots \\
    a_{m1}B & \cdots & a_{mk}B \\
  \end{array}
\right].
\end{displaymath}
One can see easily that the tensor product is bilinear and associative, but not commutative. In addition, the tensor product of two positive definite (positive semidefinite) matrices is positive definite (positive semidefinite, respectively).
We enumerate a few properties of the tensor product that we will use in the following.

\begin{lemma} \label{L:Tensor} \cite[Section 4.3]{Zh}
The tensor product satisfies the following.
\begin{itemize}
\item[(1)] For $A \in M_{m,k}, B \in M_{r,s}, C \in M_{k,l}$ and $D \in M_{s,t}$
    $$ (A \otimes B)(C \otimes D) = AC \otimes BD.$$
\item[(2)] For positive definite matrices $A, B$ and any real number $t$
    $$(A \otimes B)^{t} = A^{t} \otimes B^{t}.$$
\end{itemize}
\end{lemma}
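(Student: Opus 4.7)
The plan is to verify each identity by exploiting the block structure of the tensor product, with part (1) handled by a direct computation and part (2) reduced to (1) plus the spectral theorem.

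For part (1), I would write $(A\otimes B)(C\otimes D)$ in block form. Viewing $A\otimes B$ as an $m\times k$ array of blocks of size $r\times s$, with $(i,j)$ block $a_{ij}B$, and $C\otimes D$ as a $k\times l$ array of blocks of size $s\times t$, with $(p,q)$ block $c_{pq}D$, the ordinary block matrix multiplication rule yields
\begin{displaymath}
\sum_{p=1}^{k}(a_{ip}B)(c_{pq}D)=\Bigl(\sum_{p=1}^{k}a_{ip}c_{pq}\Bigr)BD=(AC)_{iq}\,BD,
\end{displaymath}
which is precisely the $(i,q)$ block of $AC\otimes BD$. The only thing to check is that all sizes line up, which they do under the stated hypotheses.

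For part (2), the integer case follows by induction from (1): if $(A\otimes B)^{n}=A^{n}\otimes B^{n}$, then $(A\otimes B)^{n+1}=(A^{n}\otimes B^{n})(A\otimes B)=A^{n+1}\otimes B^{n+1}$ by (1). The case $t=-1$ is obtained by noting that (1) gives $(A\otimes B)(A^{-1}\otimes B^{-1})=I_{m}\otimes I_{r}=I_{mr}$, so $(A\otimes B)^{-1}=A^{-1}\otimes B^{-1}$, and combining with the positive-integer case handles all integer exponents.

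To extend to arbitrary real $t$, which is the main obstacle since matrix powers are defined via functional calculus, I would invoke the spectral theorem. Writing $A=U\,\mathrm{diag}(\lambda_{i})\,U^{*}$ and $B=V\,\mathrm{diag}(\mu_{j})\,V^{*}$ with $\lambda_{i},\mu_{j}>0$, repeated application of (1) gives
\begin{displaymath}
A\otimes B=(U\otimes V)\bigl(\mathrm{diag}(\lambda_{i})\otimes\mathrm{diag}(\mu_{j})\bigr)(U\otimes V)^{*},
\end{displaymath}
with $U\otimes V$ unitary (again by (1)) and $\mathrm{diag}(\lambda_{i})\otimes\mathrm{diag}(\mu_{j})=\mathrm{diag}(\lambda_{i}\mu_{j})$ diagonal with strictly positive entries. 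Functional calculus then yields $(A\otimes B)^{t}=(U\otimes V)\,\mathrm{diag}((\lambda_{i}\mu_{j})^{t})\,(U\otimes V)^{*}$. Since each $\lambda_{i}\mu_{j}>0$, we have $(\lambda_{i}\mu_{j})^{t}=\lambda_{i}^{t}\mu_{j}^{t}$, and reversing the tensor factorization gives $(U\,\mathrm{diag}(\lambda_{i}^{t})\,U^{*})\otimes(V\,\mathrm{diag}(\mu_{j}^{t})\,V^{*})=A^{t}\otimes B^{t}$. The positive definiteness of $A$ and $B$ is essential precisely in order to make sense of $\lambda_{i}^{t}$ and $\mu_{j}^{t}$ for non-integer $t$.
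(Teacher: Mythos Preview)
Your argument is correct. The paper does not actually prove this lemma; it simply cites \cite[Section 4.3]{Zh} and states the result without proof, so there is no ``paper's own proof'' to compare against. What you have written is the standard textbook argument: the block-multiplication computation for (1), and the spectral-decomposition reduction for (2), are exactly how these identities are established in references such as Zhang's book. The intermediate integer and inverse cases you include are not strictly needed once you have the spectral argument, but they do no harm.
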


We get the following identity of Wasserstein means related with the tensor product.

\begin{theorem} \label{T:Tensor}
Let $\mathbb{A} = (A_{1}, \dots, A_{n}), \mathbb{B} = (B_{1}, \dots, B_{n}) \in \mathbb{P}_{m}^{n}$, and let $\omega = (w_{1}, \dots, w_{n}), \mu = (\mu_{1}, \dots, \mu_{n}) \in \Delta_{n}$. Then
\begin{displaymath}
\Omega(\omega; \mathbb{A}) \otimes \Omega(\mu; \mathbb{B}) = \Omega(\omega \otimes \mu; \underbrace{A_{1} \otimes B_{1}, \dots, A_{1}\otimes B_{n}}, \dots, \underbrace{A_{n} \otimes B_{1}, \dots, A_{n} \otimes B_{n}})
\end{displaymath}
where $\omega \otimes \mu := (\underbrace{w_{1} \mu_{1}, \dots, w_{1} \mu_{n}}, \dots, \underbrace{w_{n} \mu_{1}, \dots, w_{n} \mu_{n}}) \in \Delta_{n^{2}}.$
\end{theorem}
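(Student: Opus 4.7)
The plan is to use the characterization from Theorem \ref{T:WassEq}: the Wasserstein mean is the unique positive definite solution $X$ of $I = \sum_j w_j (A_j \# X^{-1})$. I will show that $X \otimes Y$, with $X = \Omega(\omega;\mathbb{A})$ and $Y = \Omega(\mu;\mathbb{B})$, satisfies the defining equation on the right-hand side, and then invoke uniqueness.

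The first step is a preparatory lemma (or direct computation) establishing the tensor-product compatibility of the two-variable geometric mean:
\begin{displaymath}
(A \otimes B) \# (C \otimes D) \;=\; (A \# C) \otimes (B \# D),
\end{displaymath}
for positive definite $A, B, C, D$. This follows by unwinding the formula $P \# Q = P^{1/2}(P^{-1/2} Q P^{-1/2})^{1/2} P^{1/2}$ and applying Lemma \ref{L:Tensor}: the identity $(A \otimes B)^t = A^t \otimes B^t$ handles the square roots and inverse square roots, while part (1) multiplicatively distributes the inner product across the tensor.

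The second step applies this identity. Set $X = \Omega(\omega;\mathbb{A})$ and $Y = \Omega(\mu;\mathbb{B})$, so that by Theorem \ref{T:WassEq},
\begin{displaymath}
I_m = \sum_{i=1}^{n} w_{i} (A_{i} \# X^{-1}), \qquad I_m = \sum_{j=1}^{n} \mu_{j} (B_{j} \# Y^{-1}).
\end{displaymath}
Using $(X \otimes Y)^{-1} = X^{-1} \otimes Y^{-1}$ (Lemma \ref{L:Tensor}(2) with $t = -1$) and the compatibility identity from step one, together with the bilinearity of $\otimes$,
\begin{displaymath}
\sum_{i,j} w_{i}\mu_{j} \bigl((A_{i}\otimes B_{j}) \# (X\otimes Y)^{-1}\bigr)
= \Bigl(\sum_{i} w_{i} (A_{i}\# X^{-1})\Bigr) \otimes \Bigl(\sum_{j} \mu_{j}(B_{j}\# Y^{-1})\Bigr) = I_m \otimes I_m = I_{m^2}.
\end{displaymath}

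In the third step, since $\omega \otimes \mu \in \Delta_{n^2}$ and $X \otimes Y \in \mathbb{P}_{m^{2}}$, the uniqueness clause of Theorem \ref{T:WassEq} forces $X \otimes Y$ to coincide with the Wasserstein mean of the $n^2$ tensor products $A_i \otimes B_j$ weighted by $w_i \mu_j$, which is exactly the stated identity. The only non-routine point is the tensor compatibility of $\#$ in step one; once that is in hand, the rest is purely algebraic manipulation plus the uniqueness principle, so I do not expect a substantive obstacle.
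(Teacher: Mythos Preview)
Your proof is correct and follows essentially the same approach as the paper: show that $X\otimes Y$ satisfies the characterizing equation of Theorem~\ref{T:WassEq} for the tensored data, then invoke uniqueness. The only cosmetic difference is that the paper uses the equivalent form $X=\sum_{j} w_{j}(X^{1/2}A_{j}X^{1/2})^{1/2}$ and expands $X\otimes Y$ directly via Lemma~\ref{L:Tensor}, whereas you package the same computation as the auxiliary identity $(A\otimes B)\#(C\otimes D)=(A\# C)\otimes(B\# D)$ and work with the form $I=\sum_{j} w_{j}(A_{j}\# X^{-1})$; the underlying algebra is identical.
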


\begin{proof}
Let $X = \Omega(\omega; \mathbb{A})$ and $Y = \Omega(\mu; \mathbb{B}).$
Applying Theorem \ref{T:WassEq}, the linearity of tensor product, and Lemma \ref{L:Tensor}, we have
\begin{align*}
X \otimes Y & = \left( \sum_{i=1}^{n} w_{i} (X^{1/2} A_{i} X^{1/2})^{1/2} \right) \otimes \left( \sum_{j=1}^{n} \mu_{j} (Y^{1/2} B_{j} Y^{1/2})^{1/2} \right) \\
& = \sum_{i,j=1}^{n} \omega_{i} \mu_{j}((X \otimes Y)^{1/2}(A_{i} \otimes B_{j})(X \otimes Y)^{1/2})^{1/2}.
\end{align*}
Note that $\omega \otimes \mu \in \Delta_{n^{2}}$, and hence, we obtain by Theorem \ref{T:WassEq} that
\begin{displaymath}
X \otimes Y = \Omega(\omega \otimes \mu; A_{1} \otimes B_{1}, \dots, A_{1} \otimes B_{n}, \dots, A_{n} \otimes B_{1}, \dots, A_{n} \otimes B_{n}).
\end{displaymath}
\end{proof}

By the arithmetic-Wasserstein mean inequality in Theorem \ref{T:bounds}, we easily obtain the following.
\begin{corollary} \label{C:Tensor}
Let $\mathbb{A} = (A_{1}, \dots, A_{n}), \mathbb{B} = (B_{1}, \dots, B_{n}) \in \mathbb{P}_{m}^{n}$, and let $\omega = (w_{1}, \dots, w_{n}), \mu = (\mu_{1}, \dots, \mu_{n}) \in \Delta_{n}$. Then
\begin{displaymath}
\Omega(\omega; \mathbb{A}) \otimes \Omega(\mu; \mathbb{B}) \leq \mathcal{A}(\omega \otimes \mu; \underbrace{A_{1} \otimes B_{1}, \dots, A_{1}\otimes B_{n}}, \dots, \underbrace{A_{n} \otimes B_{1}, \dots, A_{n} \otimes B_{n}}),
\end{displaymath}
where $\displaystyle \mathcal{A}(\omega \otimes \mu; A_{1} \otimes B_{1}, \dots, A_{1} \otimes B_{n}, \dots, A_{n} \otimes B_{1}, \dots, A_{n} \otimes B_{n}) = \sum_{i,j=1}^{n} w_{i} \mu_{j} A_{i} \otimes B_{j}$.
\end{corollary}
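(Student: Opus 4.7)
The plan is to obtain this corollary as an immediate combination of Theorem \ref{T:Tensor} (which rewrites the tensor product of two Wasserstein means as a single Wasserstein mean over the $n^{2}$ tensor products $A_{i}\otimes B_{j}$ with weight vector $\omega\otimes\mu$) and Theorem \ref{T:bounds} (which bounds any Wasserstein mean above by the corresponding weighted arithmetic mean). Essentially no new ingredients are needed.

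First I would set $X=\Omega(\omega;\mathbb{A})$ and $Y=\Omega(\mu;\mathbb{B})$, and invoke Theorem \ref{T:Tensor} to rewrite
\begin{displaymath}
X \otimes Y \;=\; \Omega(\omega\otimes\mu;\, A_{1}\otimes B_{1},\dots,A_{1}\otimes B_{n},\dots,A_{n}\otimes B_{1},\dots,A_{n}\otimes B_{n}).
\end{displaymath}
Then I would verify that $\omega\otimes\mu \in \Delta_{n^{2}}$ (a quick check: $\sum_{i,j} w_{i}\mu_{j} = (\sum_{i} w_{i})(\sum_{j}\mu_{j}) = 1$, and all entries are positive), so the hypotheses of Theorem \ref{T:bounds} are satisfied for this $n^{2}$-tuple of positive definite matrices and this probability vector.

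Next I would apply the arithmetic upper bound from Theorem \ref{T:bounds} to the right-hand side, yielding
\begin{displaymath}
X\otimes Y \;\leq\; \sum_{i,j=1}^{n} w_{i}\mu_{j}\,(A_{i}\otimes B_{j}) \;=\; \mathcal{A}(\omega\otimes\mu;\,A_{1}\otimes B_{1},\dots,A_{n}\otimes B_{n}),
\end{displaymath}
which is precisely the claimed inequality. Since $A_{i}\otimes B_{j}\in\mathbb{P}_{m^{2}}$ for all $i,j$, everything stays inside the cone where Theorem \ref{T:bounds} applies.

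There is no real obstacle in this argument; the only point requiring a line of care is the observation that $\omega\otimes\mu$ is a genuine probability vector in $\Delta_{n^{2}}$, so that Theorem \ref{T:bounds} is legitimately applicable to the $n^{2}$-tuple of tensor products. Everything else is a direct substitution.
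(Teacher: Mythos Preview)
Your proposal is correct and follows exactly the paper's own route: the corollary is stated as an immediate consequence of Theorem \ref{T:Tensor} together with the arithmetic--Wasserstein inequality in Theorem \ref{T:bounds}, which is precisely the two-step argument you outline.
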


The \emph{Hadamard product} (or the \emph{Schur product}) $A \circ B$ of $A = [a_{ij}]$ and $B = [b_{ij}]$ in $M_{m, k}$ is the $m \times k$ matrix:
\begin{displaymath}
\displaystyle A \circ B := [a_{ij} b_{ij}].
\end{displaymath}
Simply one can see that the Hadamard product is the entry-wise product and gives us a binary operation on $M_{m, k}$. Moreover, the Hadamard product is bilinear, commutative, and associative. Moreover, the Hadamard product preserves positivity; the Hadamard product of two positive definite (positive semidefinite) matrices is again positive definite (positive semidefinite, respectively). This is known as the Schur product theorem.

We show the inequality of Wasserstein means related with the Hadamard product.
\begin{lemma}\cite[Lemma 4]{An} \label{L:An}
There exists a strictly positive and unital linear map $\Phi$ such that for any $A, B \in M_{m}$
\begin{displaymath}
\Phi(A \otimes B)= A \circ B.
\end{displaymath}
\end{lemma}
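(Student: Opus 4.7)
The plan is to realize $\Phi$ as a compression by an explicit isometry. The motivation comes from the observation that the Hadamard product $A \circ B$ already appears as a principal submatrix of the tensor product $A \otimes B$: the entries of $A \otimes B$ indexed by the $m$ ``diagonal'' pairs $(i,i)$ in the product basis are precisely $a_{ij} b_{ij}$, which reconstitutes $A \circ B$. So the natural candidate for $\Phi$ is the compression of $M_{m^{2}}$ onto the $m$-dimensional subspace of $\mathbb{C}^{m} \otimes \mathbb{C}^{m}$ spanned by these diagonal tensors.

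First I would introduce the linear map $J : \mathbb{C}^{m} \to \mathbb{C}^{m} \otimes \mathbb{C}^{m}$ defined on the standard orthonormal basis by $J e_{i} := e_{i} \otimes e_{i}$. Since $\langle e_{i} \otimes e_{i}, e_{j} \otimes e_{j} \rangle = \delta_{ij}$, the vectors $\{e_{i} \otimes e_{i}\}_{i=1}^{m}$ are orthonormal, so $J^{*} J = I_{m}$ and $J$ is an isometry. Then I would set $\Phi(X) := J^{*} X J$ for $X \in M_{m^{2}}$. Linearity is immediate, and unitality follows from $\Phi(I_{m^{2}}) = J^{*} J = I_{m}$.

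The positivity properties follow from the identity $\langle y, \Phi(X) y \rangle = \langle Jy, X Jy \rangle$ for $y \in \mathbb{C}^{m}$: when $X \ge O$ the right side is nonnegative for every $y$, giving $\Phi(X) \ge O$; when $X > O$ and $y \ne 0$, injectivity of $J$ forces $Jy \ne 0$, so the right side is strictly positive, giving $\Phi(X) > O$. The Hadamard identity reduces to an entrywise check: $e_{i}^{*} \Phi(A \otimes B) e_{j} = (e_{i} \otimes e_{i})^{*} (A \otimes B)(e_{j} \otimes e_{j}) = (e_{i}^{*} A e_{j})(e_{i}^{*} B e_{j}) = a_{ij} b_{ij} = [A \circ B]_{ij}$.

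I do not anticipate a substantial obstacle: once the correct isometry is identified, the four required properties (linear, unital, strictly positive, and intertwining tensor with Hadamard) all fall out of routine computations. The only genuine design decision is recognizing that the ``diagonal'' embedding $e_{i} \mapsto e_{i} \otimes e_{i}$ is precisely what extracts the Hadamard product from the tensor product as a principal submatrix.
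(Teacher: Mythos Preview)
Your argument is correct: the compression $\Phi(X)=J^{*}XJ$ by the isometry $J e_{i}=e_{i}\otimes e_{i}$ is linear, unital, strictly positive (since $J$ is injective), and satisfies $\Phi(A\otimes B)=A\circ B$ by the entrywise computation you gave. The paper does not supply its own proof of this lemma but simply cites Ando~\cite{An}; your construction is precisely the classical one found there, so there is nothing to contrast.
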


\begin{theorem} \label{T:Hada}
Let $\mathbb{A} = (A_{1}, \dots, A_{n}), \mathbb{B} = (B_{1}, \dots, B_{n}) \in \mathbb{P}_{m}^{n}$ and let $\omega = (w_{1}, \dots, w_{n}), \mu = (\mu_{1}, \dots, \mu_{n}) \in \Delta_{n}$. Then
\begin{displaymath}
\Omega(\omega; \mathbb{A}) \circ \Omega(\mu;\mathbb{B}) \leq \mathcal{A}(\omega \otimes \mu;\underbrace{A_{1} \circ B_{1},\dots, A_{1} \circ B_{n}},\dots, \underbrace{A_{n} \circ B_{1}, \dots, A_{n} \circ B_{n}}).
\end{displaymath}
\end{theorem}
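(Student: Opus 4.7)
The plan is to piece together three facts already in play: the tensor product identity for the Wasserstein mean (Theorem \ref{T:Tensor}), its arithmetic upper bound (Corollary \ref{C:Tensor}), and Ando's realization of the Hadamard product as a strictly positive unital linear image of the tensor product (Lemma \ref{L:An}). The key observation is that the Hadamard product can be pulled outside the Wasserstein mean by first forming the tensor product, bounding it, and then applying $\Phi$.

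First I would set $X = \Omega(\omega;\mathbb{A})$ and $Y = \Omega(\mu;\mathbb{B})$, and invoke Theorem \ref{T:Tensor} to rewrite $X \otimes Y$ as a Wasserstein mean of the matrices $A_i \otimes B_j$ with weights $w_i\mu_j$. Then Corollary \ref{C:Tensor} gives
\begin{displaymath}
X \otimes Y \;\leq\; \sum_{i,j=1}^{n} w_i \mu_j \, (A_i \otimes B_j).
\end{displaymath}
Next, I would take the map $\Phi$ provided by Lemma \ref{L:An}, so that $\Phi(A_i \otimes B_j) = A_i \circ B_j$ and in particular $\Phi(X \otimes Y) = X \circ Y$. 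Since $\Phi$ is a positive linear map it preserves the Loewner order, so applying $\Phi$ to the above inequality yields
\begin{displaymath}
X \circ Y \;=\; \Phi(X \otimes Y) \;\leq\; \sum_{i,j=1}^{n} w_i \mu_j \, \Phi(A_i \otimes B_j) \;=\; \sum_{i,j=1}^{n} w_i \mu_j \, (A_i \circ B_j),
\end{displaymath}
which is exactly the right-hand side $\mathcal{A}(\omega \otimes \mu; A_1 \circ B_1,\ldots, A_n \circ B_n)$.

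I do not expect a serious obstacle here: the argument is a clean three-line assembly. The only point that requires any care is confirming that the $\Phi$ from Lemma \ref{L:An} is \emph{the same} map on both sides of the inequality (it is, since it depends only on the ambient matrix size), and that positivity of $\Phi$ is enough to preserve $\leq$ — strict positivity and unitality are not needed for this particular step, although they were crucial earlier in the paper. No appeal to Theorem \ref{T:Phi-Wass} or to the geometric-mean inequality $\Phi(A \# B) \leq \Phi(A) \# \Phi(B)$ is required; the Hadamard-product bound is obtained purely as the $\Phi$-image of the tensor-product bound already established.
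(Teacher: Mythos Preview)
Your proof is correct and follows essentially the same route as the paper: apply Corollary~\ref{C:Tensor} to bound $\Omega(\omega;\mathbb{A})\otimes\Omega(\mu;\mathbb{B})$ by the weighted arithmetic mean of the $A_i\otimes B_j$, then push the inequality through the positive linear map $\Phi$ of Lemma~\ref{L:An} to convert tensor products into Hadamard products. Your remark that only positivity (not strict positivity or unitality) of $\Phi$ is needed here is accurate and a nice clarification, but the argument is otherwise identical to the paper's.
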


\begin{proof}
Using Corollary \ref{C:Tensor} and the strictly positive linear map $\Phi$ in Lemma \ref{L:An}, we get
\begin{align*}
& \Omega(\omega; \mathbb{A}) \circ \Omega(\mu;\mathbb{B}) \\
& = \Phi(\Omega(\omega; \mathbb{A}) \otimes \Omega(\mu;\mathbb{B})) \\
& \leq \Phi ( \mathcal{A}(\omega \otimes \mu; A_{1} \otimes B_{1}, \dots, A_{1}\otimes B_{n}, \dots, A_{n} \otimes B_{1}, \dots, A_{n} \otimes B_{n} )) \\
& = \mathcal{A}(\omega \otimes \mu; \Phi(A_{1} \otimes B_{1}),\dots,\Phi( A_{1} \otimes B_{n}),\dots, \Phi(A_{n} \otimes B_{1}), \dots, \Phi(A_{n} \otimes B_{n}) )\\
& = \mathcal{A}(\omega \otimes \mu; A_{1} \circ B_{1},\dots, A_{1} \circ B_{n},\dots, A_{n} \circ B_{1}, \dots, A_{n} \circ B_{n}).
\end{align*}
\end{proof}

\begin{proposition}
Let $A, B, C, D \in \mathbb{P}_{m}$ such that $A B = B A$ and $C D = D C$. Then
\begin{displaymath}
(A B + B A) \circ (C D + D C) - (A^{2} + B^{2}) \circ (C^{2} + D^{2}) \leq \frac{1}{2} (A - B)^{2} \circ (C - D)^{2}.
\end{displaymath}
\end{proposition}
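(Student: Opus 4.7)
The plan is to specialize Theorem \ref{T:Hada} to the pairs $(A^{2},B^{2})$ and $(C^{2},D^{2})$ with uniform weight $(\tfrac{1}{2},\tfrac{1}{2})$, and to reduce the resulting Hadamard inequality to the claim using the polarization identities $(A\pm B)^{2} = (A^{2}+B^{2})\pm (AB+BA)$ together with the analogous identities for $C,D$ (note that these two identities require no commutativity).

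First, since $A,B\in\mathbb{P}_{m}$ and $AB=BA$, the squares $A^{2}$ and $B^{2}$ also commute, and similarly $C^{2}$ and $D^{2}$ commute. Remark \ref{R:comm} together with $(A^{2})^{1/2}=A$ therefore gives
\[
\Omega\!\left((\tfrac{1}{2},\tfrac{1}{2});A^{2},B^{2}\right) = \tfrac{1}{4}(A+B)^{2}, \qquad \Omega\!\left((\tfrac{1}{2},\tfrac{1}{2});C^{2},D^{2}\right) = \tfrac{1}{4}(C+D)^{2}.
\]
Applying Theorem \ref{T:Hada} with $\mathbb{A}=(A^{2},B^{2})$, $\mathbb{B}=(C^{2},D^{2})$, $\omega=\mu=(\tfrac{1}{2},\tfrac{1}{2})$ (so that $\omega\otimes\mu$ is the uniform weight on the four entries $A^{2}\circ C^{2},A^{2}\circ D^{2},B^{2}\circ C^{2},B^{2}\circ D^{2}$), and clearing denominators, produces
\[
(A+B)^{2}\circ(C+D)^{2} \;\le\; 4\,(A^{2}+B^{2})\circ(C^{2}+D^{2}). \qquad (\ast)
\]

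Second, I would expand the left-hand side of $(\ast)$ by $(A+B)^{2}=(A^{2}+B^{2})+(AB+BA)$ and $(C+D)^{2}=(C^{2}+D^{2})+(CD+DC)$, then subtract $(A^{2}+B^{2})\circ(C^{2}+D^{2})$ from both sides; this converts $(\ast)$ into
\[
(A^{2}+B^{2})\circ(CD+DC) + (AB+BA)\circ(C^{2}+D^{2}) + (AB+BA)\circ(CD+DC) \;\le\; 3\,(A^{2}+B^{2})\circ(C^{2}+D^{2}).
\]
A parallel expansion of $\tfrac{1}{2}(A-B)^{2}\circ(C-D)^{2}$ via $(A-B)^{2}=(A^{2}+B^{2})-(AB+BA)$ and $(C-D)^{2}=(C^{2}+D^{2})-(CD+DC)$ shows that the inequality in the proposition is exactly the preceding display after halving and rearranging the six cross-terms. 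The only hurdle is the bookkeeping of those Hadamard cross-terms; no operator-theoretic input beyond Theorem \ref{T:Hada} and Remark \ref{R:comm} is needed, and in particular no further inequality has to be invoked to close the argument.
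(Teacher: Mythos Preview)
Your proof is correct and follows essentially the same route as the paper: both specialize Theorem~\ref{T:Hada} (via Remark~\ref{R:comm}) to $(A^{2},B^{2})$ and $(C^{2},D^{2})$ with uniform weights to obtain $(A+B)^{2}\circ(C+D)^{2}\le 4(A^{2}+B^{2})\circ(C^{2}+D^{2})$, and then reduce this to the stated inequality by polarization identities. The only difference is cosmetic---the paper substitutes $(A+B)^{2}=2(A^{2}+B^{2})-(A-B)^{2}$ and $(C^{2}+D^{2})=(C-D)^{2}+(CD+DC)$ to reach the claim, whereas you expand all four terms directly and match cross-terms---but the underlying argument is identical.
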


\begin{proof}
Since $A$ and $B$ commute, so do $A^{2}$ and $B^{2}$. Moreover, $C^{2}$ and $D^{2}$ commute. By Theorem \ref{T:Hada} together with Remark \ref{R:comm} for $\omega = \mu = (1/2, 1/2)$
\begin{displaymath}
\begin{split}
& \Omega (1/2, 1/2; A^{2}, B^{2}) \circ \Omega (1/2, 1/2; C^{2}, D^{2}) = \left( \frac{A + B}{2} \right)^{2} \circ \left( \frac{C + D}{2} \right)^{2} \\
& \leq \frac{1}{4} (A^{2} \circ C^{2} + A^{2} \circ D^{2} + B^{2} \circ C^{2} + B^{2} \circ D^{2}) = \frac{1}{4} (A^{2} + B^{2}) \circ (C^{2} + D^{2}).
\end{split}
\end{displaymath}
It reduces to
\begin{displaymath}
(A + B)^{2} \circ (C D + D C) - (A - B)^{2} \circ (C^{2} + D^{2}) \leq 2 (A^{2} + B^{2}) \circ (C^{2} + D^{2}).
\end{displaymath}
Since the left-hand side is equivalent to $2 (A B + B A) \circ (C D + D C) - (A - B)^{2} \circ (C - D)^{2}$, we obtain the desired inequality by simplification.
\end{proof}

We show another inequality of Wasserstein means related with the Hadamard product.
\begin{lemma} \cite[Section 7.7]{Zh} \label{L:Hada}
For $A, B \in \mathbb{P}_{m}$
\begin{displaymath}
(A \circ B)^{-1} \leq A^{-1} \circ B^{-1} \leq \frac{(\lambda_{1} + \lambda_{m})^{2}}{4 \lambda_{1} \lambda_{m}} (A \circ B)^{-1},
\end{displaymath}
where $\lambda_{1}$ and $\lambda_{m}$ are the largest and smallest eigenvalues of $A \otimes B$, respectively.
\end{lemma}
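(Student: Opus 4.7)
My plan is to deduce both inequalities from the strictly positive unital linear map $\Phi$ of Lemma \ref{L:An}, which satisfies $\Phi(A \otimes B) = A \circ B$. Setting $X := A \otimes B$ and using Lemma \ref{L:Tensor}(2) to write $X^{-1} = A^{-1} \otimes B^{-1}$, the two claimed inequalities become exactly the abstract two-sided estimate
$$\Phi(X)^{-1} \leq \Phi(X^{-1}) \leq \frac{(\lambda_{1} + \lambda_{m})^{2}}{4 \lambda_{1} \lambda_{m}}\, \Phi(X)^{-1}$$
for any positive unital linear map $\Phi$ and any positive definite $X$ with spectrum in $[\lambda_{m}, \lambda_{1}]$. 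In our situation the spectrum of $A \otimes B$ consists of the products $\{\alpha_{i} \beta_{j}\}$ where $\alpha_{i}, \beta_{j}$ run over the eigenvalues of $A$ and $B$, so its extremes are precisely $\lambda_{1}$ and $\lambda_{m}$.

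For the left bound I would invoke the Choi inequality $\Phi(X)^{-1} \leq \Phi(X^{-1})$, a standard fact valid for every positive unital linear map $\Phi$ applied to a positive definite $X$.

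For the right bound I would proceed in two Kantorovich-style steps. First, the chord estimate $\tfrac{1}{t} \leq \tfrac{\lambda_{1} + \lambda_{m} - t}{\lambda_{1} \lambda_{m}}$ on $[\lambda_{m}, \lambda_{1}]$ gives $X^{-1} \leq \tfrac{(\lambda_{1} + \lambda_{m}) I - X}{\lambda_{1} \lambda_{m}}$ by the functional calculus, and applying $\Phi$ yields
$$\Phi(X^{-1}) \leq \frac{(\lambda_{1} + \lambda_{m}) I - \Phi(X)}{\lambda_{1} \lambda_{m}}.$$
Second, since $\Phi$ is positive and unital, the operator $Y := \Phi(X)$ has spectrum in $[\lambda_{m}, \lambda_{1}]$, so $\bigl(Y - \tfrac{\lambda_{1} + \lambda_{m}}{2} I\bigr)^{2} \geq 0$; expanding and multiplying by the commuting positive factor $Y^{-1}$ rearranges this to
$$(\lambda_{1} + \lambda_{m}) I - Y \leq \tfrac{1}{4}(\lambda_{1} + \lambda_{m})^{2}\, Y^{-1}.$$
Dividing by $\lambda_{1}\lambda_{m}$ and chaining with the previous display delivers the desired upper bound.

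The main obstacle I anticipate is the second Kantorovich step, where a scalar polynomial inequality must be transferred cleanly into an operator inequality; what makes it work is that every operator involved is a polynomial in $Y = \Phi(X)$, so the functional calculus handles the rearrangement without the usual pitfalls of non-commutative products. Once this abstract double inequality is in hand, specialising it to the map $\Phi$ of Lemma \ref{L:An} and to $X = A \otimes B$ yields the lemma.
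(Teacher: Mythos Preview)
The paper does not supply its own proof of this lemma; it is quoted from \cite[Section 7.7]{Zh} and used as a black box. So there is no ``paper's proof'' to compare against here.

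Your argument is correct and is in fact the standard route to this result: reduce to the abstract two-sided estimate $\Phi(X)^{-1}\le\Phi(X^{-1})\le K\,\Phi(X)^{-1}$ for a positive unital map, then specialise via Lemma~\ref{L:An} and $X=A\otimes B$. The left side is Choi's inequality (available in \cite{Bh}, which the paper already cites), and the right side is the Kantorovich inequality for positive unital maps, derived exactly as you outline. One small remark: in your second Kantorovich step you note that $Y=\Phi(X)$ has spectrum in $[\lambda_m,\lambda_1]$, but you never use this; the inequality $(Y-\tfrac{\lambda_1+\lambda_m}{2}I)^2\ge0$ holds for \emph{any} Hermitian $Y$, and the subsequent manipulation only requires $Y>0$ (so that $Y^{-1}$ exists and commutes with polynomials in $Y$). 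This is harmless, but you can drop the spectral claim for $Y$ without loss.
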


\begin{remark} \label{R:Kantorovich}
For $0 < p \leq q$ the value $\displaystyle \frac{(p + q)^{2}}{4 p q}$ is known as the Kantorovich constant. One can rewrite it as $\displaystyle f(r) = \frac{(r + 1)^{2}}{4 r}$ for $r = q / p \geq 1$, and $f$ is increasing on $r \geq 1$. It has been widely used in the converse inequalities of the weighted arithmetic, geometric, and harmonic means \cite{FFNPS, KL}.
\end{remark}

\begin{proposition} \label{P:Hada}
Let $\mathbb{A} = (A_{1}, \dots, A_{n}), \mathbb{B} = (B_{1}, \dots, B_{n}) \in \mathbb{P}_{m}^{n}$. Assume that $\alpha_{i} I \leq A_{i} \leq \beta_{i} I$ and $\gamma_{i} I \leq B_{i} \leq \delta_{i} I$ for all $i = 1, \dots, n$, where $\alpha_{i}, \beta_{i}, \gamma_{i}, \delta_{i} > 0$. Let $\omega = (w_{1}, \dots, w_{n}), \mu = (\mu_{1}, \dots, \mu_{n}) \in \Delta_{n}$. Let $X = \Omega(\omega; \mathbb{A})$ and $Y = \Omega(\mu; \mathbb{B})$. Then
\begin{displaymath}
X \circ Y \leq \frac{\alpha \gamma + \beta \delta}{2 \sqrt{\alpha \beta \gamma \delta}} \sum_{i,j=1}^{n} w_{i} \mu_{j} \left[ (X \circ Y)^{1/2} (A_{i} \circ B_{j}) (X \circ Y)^{1/2} \right]^{1/2},
\end{displaymath}
where $\alpha := \underset{1 \leq i \leq n}{\min} \{ \alpha_{i} \}, \beta := \underset{1 \leq i \leq n}{\max} \{ \beta_{i} \}, \gamma := \underset{1 \leq i \leq n}{\min} \{ \gamma_{i} \}$, and $\delta := \underset{1 \leq i \leq n}{\max} \{ \delta_{i} \}$.
\end{proposition}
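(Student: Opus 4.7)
The plan is to work with the geometric-mean form $I = \sum_j w_j (A_j \# X^{-1})$ of the Wasserstein equation rather than the square-root form, because the positive linear map $\Phi$ of Lemma \ref{L:An} interacts cleanly with $\#$ via Lemma \ref{L:Phi}. The argument combines three ingredients: the tensor-product identity (Theorem \ref{T:Tensor}), the positive-linear-map estimate (Lemma \ref{L:Phi}), and the Kantorovich-type converse for the Hadamard inverse (Lemma \ref{L:Hada}).

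First, Theorem \ref{T:Tensor} identifies $X \otimes Y$ as $\Omega(\omega \otimes \mu; A_i \otimes B_j)$, so Theorem \ref{T:WassEq} gives
\begin{displaymath}
I = \sum_{i,j=1}^{n} w_i \mu_j \, (A_i \otimes B_j) \# (X \otimes Y)^{-1}.
\end{displaymath}
Applying the strictly positive unital map $\Phi$ of Lemma \ref{L:An}, invoking Lemma \ref{L:Phi}, and using both $\Phi(A \otimes B) = A \circ B$ and $(X \otimes Y)^{-1} = X^{-1} \otimes Y^{-1}$ produces
\begin{displaymath}
I \leq \sum_{i,j=1}^{n} w_i \mu_j \, (A_i \circ B_j) \# (X^{-1} \circ Y^{-1}).
\end{displaymath}

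Next I replace $X^{-1} \circ Y^{-1}$ by a multiple of $(X \circ Y)^{-1}$ using Lemma \ref{L:Hada}. For this I need uniform bounds on $X$ and $Y$. From $X = \sum_i w_i (X^{1/2} A_i X^{1/2})^{1/2}$, the hypotheses $\alpha I \leq A_i \leq \beta I$ together with operator monotonicity of the square root give $\sqrt{\alpha}\, X^{1/2} \leq X \leq \sqrt{\beta}\, X^{1/2}$; diagonalizing $X$ turns this into $\alpha I \leq X \leq \beta I$, and the same argument yields $\gamma I \leq Y \leq \delta I$. Hence $\alpha\gamma I \leq X \otimes Y \leq \beta\delta I$, and since $r \mapsto (r+1)^{2}/(4r)$ is increasing on $[1, \infty)$ (see Remark \ref{R:Kantorovich}), Lemma \ref{L:Hada} specializes to
\begin{displaymath}
X^{-1} \circ Y^{-1} \leq K^{2} (X \circ Y)^{-1}, \qquad K := \frac{\alpha\gamma + \beta\delta}{2\sqrt{\alpha\beta\gamma\delta}}.
\end{displaymath}

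Combining this bound with monotonicity (G3) and homogeneity (G1) of $\#$ in its second slot,
\begin{displaymath}
(A_i \circ B_j) \# (X^{-1} \circ Y^{-1}) \leq (A_i \circ B_j) \# (K^{2} (X \circ Y)^{-1}) = K \, (A_i \circ B_j) \# (X \circ Y)^{-1},
\end{displaymath}
so $I \leq K \sum_{i,j} w_i \mu_j (A_i \circ B_j) \# (X \circ Y)^{-1}$. Conjugating both sides by $(X \circ Y)^{1/2}$ and applying (G4) together with the elementary identity $M \# I = M^{1/2}$ then yields the claimed estimate. The main obstacle is the bounding step: one must extract $\alpha I \leq X \leq \beta I$ from the non-linear fixed-point equation and then verify that the Kantorovich constant attached to the unknown extreme eigenvalues of $X \otimes Y$ is dominated by the explicit $K^{2}$ built from $\alpha, \beta, \gamma, \delta$. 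Everything else is bookkeeping with (G1), (G3), and (G4).
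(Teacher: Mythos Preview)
Your argument is correct and structurally identical to the paper's: both arrive at
\[
I \leq \sum_{i,j} w_i \mu_j\, (A_i \circ B_j) \# (X^{-1} \circ Y^{-1}),
\]
then invoke the Kantorovich bound of Lemma~\ref{L:Hada} (with the spectral bounds $\alpha\gamma I \leq X \otimes Y \leq \beta\delta I$) together with (G1), (G3), and a final conjugation by $(X \circ Y)^{1/2}$. The only difference is in how the displayed inequality is reached. The paper writes $I = I \circ I = \sum_{i,j} w_i \mu_j (X^{-1}\# A_i)\circ(Y^{-1}\# B_j)$ and then cites the external inequality $(A\#B)\circ(C\#D) \leq (A\circ C)\#(B\circ D)$ from \cite[Lemma~3.1]{LK}; you instead pass through Theorem~\ref{T:Tensor} to get the Wasserstein equation for $X\otimes Y$ and then apply Lemma~\ref{L:Phi} with the map $\Phi$ of Lemma~\ref{L:An}. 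These are equivalent (the cited lemma is proved in exactly that way), but your route has the mild advantage of staying within results already established in this paper. Likewise, where the paper cites \cite[Lemma~2.4]{LK19} for $\alpha I \leq X \leq \beta I$, you extract it directly from the fixed-point equation; your argument $\sqrt{\alpha}\,X^{1/2}\leq X \leq \sqrt{\beta}\,X^{1/2} \Rightarrow \alpha I \leq X \leq \beta I$ is sound and is essentially the proof of that lemma.
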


\begin{proof}
Let $X = \Omega(\omega; \mathbb{A})$ and $Y = \Omega(\mu; \mathbb{B})$. Then $\displaystyle I = \sum_{i=1}^{n} w_{j} (X^{-1} \# A_{i})$ and $\displaystyle I = \sum_{j=1}^{n} \mu_{j} (Y^{-1} \# B_{j})$ by Theorem \ref{T:WassEq}. So
\begin{displaymath}
\begin{split}
I = I \circ I & = \sum_{i,j=1}^{n} w_{i} \mu_{j} (X^{-1} \# A_{i}) \circ (Y^{-1} \# B_{j}) \\
& \leq \sum_{i,j=1}^{n} w_{i} \mu_{j} (X^{-1} \circ Y^{-1}) \# (A_{i} \circ B_{j}) \\
& \leq \frac{\alpha \gamma + \beta \delta}{2 \sqrt{\alpha \beta \gamma \delta}} \sum_{i,j=1}^{n} w_{i} \mu_{j} (X \circ Y)^{-1} \# (A_{i} \circ B_{j}).
\end{split}
\end{displaymath}
The second equality follows from the linearity of Hadamard product, and the first inequality follows from Lemma 3.1 in \cite{LK}.

We verify more details for the second inequality. Indeed, $\alpha_{i} I \leq A_{i} \leq \beta_{i} I$ implies $\alpha I \leq A_{i} \leq \beta I$, so $\alpha I \leq X \leq \beta I$ by Lemma 2.4 in \cite{LK19}. Similarly, we have $\gamma I \leq Y \leq \delta I$, and thus, $\alpha \gamma I \leq X \otimes Y \leq \beta \delta I$. So by Lemma \ref{L:Hada} together with Remark \ref{R:Kantorovich}, the monotonicity of geometric mean in (G3), and the joint homogeneity of geometric mean in (G1), we have
\begin{displaymath}
\begin{split}
(X^{-1} \circ Y^{-1}) \# (A_{i} \circ B_{j}) & \leq \left[ \frac{(\alpha \gamma + \beta \delta)^{2}}{4 \alpha \beta \gamma \delta} (X \circ Y)^{-1} \right] \# (A_{i} \circ B_{j}) \\
& = \frac{\alpha \gamma + \beta \delta}{2 \sqrt{\alpha \beta \gamma \delta}} \left[ (X \circ Y)^{-1} \# (A_{i} \circ B_{j}) \right].
\end{split}
\end{displaymath}
Taking the congruence transformation by $(X \circ Y)^{1/2}$ in the above, we obtain the desired inequality.
\end{proof}

\begin{remark}
Note in Proposition \ref{P:Hada} that $\alpha_{i}$ and $\beta_{i}$ can be taken as the smallest and largest eigenvalues of $A_{i}$, and $\gamma_{i}, \delta_{i}$ as the smallest and largest eigenvalues of $B_{i}$ for $i = 1, \dots, n$. If we assume that $\alpha I \leq A_{i}, B_{i} \leq \beta I$ for all $i$, then $X = \Omega(\omega; \mathbb{A})$ and $Y = \Omega(\mu; \mathbb{B})$ satisfy
\begin{displaymath}
X \circ Y \leq \frac{1}{2} \left( \frac{\beta}{\alpha} + \frac{\alpha}{\beta} \right) \sum_{i,j=1}^{n} w_{i} \mu_{j} \left[ (X \circ Y)^{1/2} (A_{i} \circ B_{j}) (X \circ Y)^{1/2} \right]^{1/2}.
\end{displaymath}
\end{remark}

By Jensen type inequalities in \cite{HP} we have that for every contraction $X$
\begin{displaymath}
\begin{split}
(X^{*} A X)^{p} & \leq X^{*} A^{p} X \hspace{5mm} \textrm{if} \ \ 1 \leq p \leq 2, \\
(X^{*} A X)^{p} & \geq X^{*} A^{p} X \hspace{5mm} \textrm{if} \ \ 0 \leq p \leq 1.
\end{split}
\end{displaymath}
Applying the above inequalities we obtain in \cite{LK19} that for any invertible matrix $X$ whose inverse $X^{-1}$ is a contraction,
\begin{equation} \label{E:Hansen_modified}
(X^{*} A X)^{p} \leq X^{*} A^{p} X \hspace{5mm} \textrm{if} \ \ 0 \leq p \leq 1.
\end{equation}

\begin{theorem}
Let $X = \Omega(\omega; \mathbb{A})$ and $Y = \Omega(\mu; \mathbb{B})$ as in Proposition \ref{P:Hada}. If $X^{-1}$ and $Y^{-1}$ are contractions, then
\begin{displaymath}
\sum_{i,j=1}^{n} w_{i} \mu_{j} (A_{i} \circ B_{j})^{1/2} \geq \frac{2 \sqrt{\alpha \beta \gamma \delta}}{\alpha \gamma + \beta \delta} I.
\end{displaymath}
\end{theorem}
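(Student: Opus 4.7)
The plan is to extract the desired inequality from the estimate in Proposition \ref{P:Hada} by peeling off the outer factors of $(X \circ Y)^{1/2}$ using the Jensen-type inequality \eqref{E:Hansen_modified}, which is exactly what forces the contraction hypothesis on $X^{-1}$ and $Y^{-1}$.

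First, I would observe that the assumption that $X^{-1}$ and $Y^{-1}$ are contractions translates, via positivity, to $X \geq I$ and $Y \geq I$. Writing $X = I + P$ and $Y = I + Q$ with $P, Q \geq O$, a direct expansion gives
\begin{displaymath}
X \circ Y = I + \mathrm{diag}(p_{11},\dots,p_{mm}) + \mathrm{diag}(q_{11},\dots,q_{mm}) + P \circ Q \geq I,
\end{displaymath}
since each diagonal correction is nonnegative and $P \circ Q \geq O$ by the Schur product theorem. Consequently $(X \circ Y)^{-1} \leq I$, so $Z := (X \circ Y)^{1/2}$ is self-adjoint, invertible, and $Z^{-1}$ is a contraction. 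This is the key input needed to invoke \eqref{E:Hansen_modified}.

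Next, I would apply \eqref{E:Hansen_modified} with $p = 1/2$ to each summand in Proposition \ref{P:Hada}, obtaining
\begin{displaymath}
\bigl[(X \circ Y)^{1/2}(A_{i} \circ B_{j})(X \circ Y)^{1/2}\bigr]^{1/2} \leq (X \circ Y)^{1/2}(A_{i} \circ B_{j})^{1/2}(X \circ Y)^{1/2}.
\end{displaymath}
Substituting into the bound from Proposition \ref{P:Hada} and factoring $(X \circ Y)^{1/2}$ out of the sum by linearity of the Hadamard product and of the summation yields
\begin{displaymath}
X \circ Y \leq \frac{\alpha\gamma + \beta\delta}{2\sqrt{\alpha\beta\gamma\delta}}\, (X \circ Y)^{1/2}\left[\sum_{i,j=1}^{n} w_{i}\mu_{j}(A_{i} \circ B_{j})^{1/2}\right](X \circ Y)^{1/2}.
\end{displaymath}
Conjugating both sides by $(X \circ Y)^{-1/2}$ (i.e., applying the congruence transform) produces $I$ on the left and the target sum (times the stated constant) on the right, and rearranging gives the claimed inequality.

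The main obstacle is the verification that $X \circ Y \geq I$, since without it \eqref{E:Hansen_modified} cannot be applied to $Z = (X \circ Y)^{1/2}$; once this is in hand, the rest of the argument is a straightforward substitution and congruence transformation. The Schur-product expansion sketched above handles that obstacle cleanly using only $X \geq I$ and $Y \geq I$.
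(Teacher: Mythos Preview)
Your proof is correct and follows the same overall strategy as the paper: establish that $(X \circ Y)^{-1/2}$ is a contraction, apply \eqref{E:Hansen_modified} termwise to the estimate from Proposition \ref{P:Hada}, and then conjugate by $(X \circ Y)^{-1/2}$. The only difference is in the verification of $X \circ Y \geq I$: the paper invokes Lemma \ref{L:Hada} to write $(X \circ Y)^{-1} \leq X^{-1} \circ Y^{-1} \leq I \circ I = I$, whereas your direct expansion $(I+P)\circ(I+Q) = I + \mathrm{diag}(P) + \mathrm{diag}(Q) + P\circ Q \geq I$ via the Schur product theorem is a slightly more elementary route to the same conclusion.
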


\begin{proof}
Since $X = \Omega(\omega; \mathbb{A}) \in \mathbb{P}_{m}$ and $Y = \Omega(\mu; \mathbb{B}) \in \mathbb{P}_{m}$, $X^{-1}$ and $Y^{-1}$ are contractions if and only if $X^{-1}, Y^{-1} \leq I$. By Lemma \ref{L:Hada} we have
\begin{displaymath}
(X \circ Y)^{-1} \leq X^{-1} \circ Y^{-1} \leq I \circ I = I.
\end{displaymath}
So $(X \circ Y)^{-1}$ is a contraction, which yields that $(X \circ Y)^{-1/2}$ is also a contraction. Applying \eqref{E:Hansen_modified} to Proposition \ref{P:Hada} implies
\begin{displaymath}
X \circ Y \leq \frac{\alpha \gamma + \beta \delta}{2 \sqrt{\alpha \beta \gamma \delta}} \sum_{i,j=1}^{n} w_{i} \mu_{j} \left[ (X \circ Y)^{1/2} (A_{i} \circ B_{j})^{1/2} (X \circ Y)^{1/2} \right].
\end{displaymath}
Taking the congruence transformation by $(X \circ Y)^{-1/2}$ we get
\begin{displaymath}
I \leq \frac{\alpha \gamma + \beta \delta}{2 \sqrt{\alpha \beta \gamma \delta}} \sum_{i,j=1}^{n} w_{i} \mu_{j} (A_{i} \circ B_{j})^{1/2},
\end{displaymath}
which is equivalent to the desired inequality.
\end{proof}

\vspace{4mm}

\textbf{Acknowledgement} \\

This work was supported by the National Research Foundation of Korea (NRF) grant funded by the Korea government (No. NRF-2018R1C1B6001394).


\begin{thebibliography} {99}

\bibitem{AC}
M. Agueh and G. Carlier, Barycenters in the Wasserstein space, SIAM J. Math. Anal. Appl. \textbf{43} (2011), 904-924.

\bibitem{ABCM}
P. C. Alvarez-Esteban, E. del Barrio, J. A. Cuesta-Albertos and C. Matran, A fixed point approach to barycenters in Wasserstein spaces, J. Math. Anal. Appl. \textbf{441} (2016), 744-762.

\bibitem{An}
T. Ando, Concavity of certain maps on positive definite matrices and applications to Hadamard products, Linear Algebra Appl. \textbf{26} (1979), 203-241.

\bibitem{Bh}
R. Bhatia, Positive Definite Matrices, Princeton Series in Applied Mathematics, Princeton University Press, 2007.

\bibitem{BJL18}
R. Bhatia, T. Jain and Y. Lim, Inequalities for the Wasserstein mean of positive definite matrices, to appear in Linear Algebra and Its Applications.

\bibitem{BJL}
R. Bhatia, T. Jain and Y. Lim, On the Bures-Wasserstein distance between positive definite matrices, to appear in Expositiones Mathematicae.

\bibitem{FFNPS}
J. I. Fujii, M. Fujii, M. Nakamura, J. Pe\v{c}ari\'{c}, and Y. Seo, A reverse inequality for the weighted geometric mean due to Lawson-Lim, Linear Algebra Appl. \textbf{427} (2007), 272-284.

\bibitem{HP}
F. Hansen, G. K. Pedersen, Jensen’s inequality for operators and L$\ddot{\text o}$wner’s theorem, Math. Ann. \textbf{258} (1982), 229–241.

\bibitem{HJ}
R. A. Horn and C. R. Johnson, Matrix Analysis, 2nd edition, Cambridge University Press, 2013.

\bibitem{HK}
J. Hwang and S. Kim, Bounds for the Wasserstein mean with applications to the Lie-Trotter mean, in preparation.

\bibitem{KL}
S. Kim and Y. Lim, A converse inequality of higher order weighted arithmetic and geometric means of positive definite operators, Linear Algebra Appl. \textbf{426} (2007), 490-496.

\bibitem{KA}
F. Kubo and T. Ando, Means of positive linear operators, Math. Ann. \textbf{246}(1980), 205-224.

\bibitem{LL}
J. Lawson and Y. Lim, The geometric mean, matrices, Metrics, and more, The American Mathematical Monthly, \textbf{108} (2001), 797-812.

\bibitem{LK}
H. Lee and S. Kim, The Hadamard product for the weighted Karcher means, Linear Algebra Appl. \textbf{501} (2016), 290-303.

\bibitem{LK19}
H. Lee and S. Kim, Inequalities of the Wasserstein mean with other matrix means, Annals of Functional Analysis, to appear.

\bibitem{LP}
Y. Lim and M. Palfia, The matrix power means and the Karcher mean, J. Func. Anal. \textbf{262:4} (2012), 1498-1514.

\bibitem{PW}
W. Pusz and S. L. Woronowicz, Functional calculus for sesquilinear forms and the purification map, Reports on Mathematical Physics \textbf{8} (1975), 159-170.

\bibitem{Zh}
F. Zhang, Matrix Theory: Basic Results and Techniques, 2nd edition, Springer, 2011.

\end{thebibliography}
\end{document}